\documentclass[reqno, 11pt, letterpaper]{amsart}

\usepackage{amsmath}
\usepackage{amsfonts}
\usepackage{amssymb}
\usepackage{graphicx}
\usepackage{amsthm,color,yfonts,cite}
\usepackage{paralist}
\usepackage{hyperref}
\usepackage{physics}
\usepackage{bbm}
 \usepackage{bm}
\usepackage{comment}

\newtheorem{theorem}{Theorem}

\newtheorem{proposition}[theorem]{Proposition}
\newtheorem{lemma}[theorem]{Lemma}
\newtheorem{corollary}[theorem]{Corollary}

\theoremstyle{remark}
\newtheorem{remark}[theorem]{Remark}

\newcommand{\ls}{\lesssim}

\newcommand{\la}{\langle}
\newcommand{\ra}{\rangle}
\newcommand{\R}{\mathbb{R}}

\newcommand{\Z}{\mathbb{Z}}
\newcommand{\pa}{\partial}
\newcommand{\ep}{\epsilon}

\usepackage{wrapfig}
\usepackage{tikz}
\usetikzlibrary{arrows,calc,decorations.pathreplacing}
\definecolor{light-gray1}{gray}{0.90}
\definecolor{light-gray2}{gray}{0.80}
\definecolor{light-gray3}{gray}{0.60}

\numberwithin{equation}{section}

\numberwithin{theorem}{section}

\numberwithin{table}{section}

\numberwithin{figure}{section}

\ifx\pdfoutput\undefined
  \DeclareGraphicsExtensions{.pstex, .eps}
\else
  \ifx\pdfoutput\relax
    \DeclareGraphicsExtensions{.pstex, .eps}
  \else
    \ifnum\pdfoutput>0
      \DeclareGraphicsExtensions{.pdf}
    \else
      \DeclareGraphicsExtensions{.pstex, .eps}
    \fi
  \fi
\fi

\title[Long-time reduction for the biharmonic NLS]{Long-time reduction for the biharmonic nonlinear Schr\"odinger equation}

\date{\today}
\author[Y. Hong]{Younghun Hong}
\address{Department of Mathematics, Chung-Ang University, Seoul 06974, South Korea}
\email{yhhong@cau.ac.kr}

\author[J. Jang]{Junyeong Jang}
\address{Department of Mathematics, Chung-Ang University, Seoul 06974, South Korea}
\email{jyjang0119@cau.ac.kr}

\begin{document}

\begin{abstract}

In this article, we study the reduction of the one-dimensional biharmonic nonlinear Schrödinger equation to the cubic nonlinear Schrödinger equation in the vanishing higher-order dispersion limit. In the intermediate regularity regime $0<s<2$, where the energy conservation law does not control the $H^s$--norm, we prove the long-time $L^2$--convergence of $H^s$--solutions with an exponential-in-time approximation bound. The argument relies mainly on persistence of regularity. This result provides a simple example of its use in limit problems without relying on higher-order conservation laws.
\end{abstract}

\maketitle


\section{Introduction}

We consider the Cauchy problem for the one-dimensional biharmonic nonlinear Schr\"odinger equation (NLS) with a cubic nonlinearity:
\begin{equation}\label{eq: biharmonic NLS}
\left\{
\begin{aligned}
i\partial_t u_\epsilon &= (-\partial_x^2 + \epsilon^2 \partial_x^4) u_\epsilon + \lambda |u_\epsilon|^2 u_\epsilon,\\
u_\epsilon(0) &= u_{\epsilon,0} \in H^s(\mathbb{R}),
\end{aligned}
\right.
\end{equation}
where $u_\epsilon = u_\epsilon(t,x): I (\subset \mathbb{R}) \times \mathbb{R} \to \mathbb{C}$ and $\epsilon > 0$ is a small parameter. By a suitable scaling and normalization, we may assume without loss of generality that the coupling constant satisfies $\lambda = 1$ (defocusing case) or $\lambda = -1$ (focusing case).
The biharmonic NLS arises as a higher-order dispersive correction to the classical NLS when second-order dispersion is insufficient. It is derived by retaining fourth-order terms in the expansion of the dispersion relation, leading to the appearance of the bi-Laplacian operator. Such models are relevant in physical settings where lower-order dispersion is weak or vanishes, including certain optical media and Bose--Einstein condensates. Consequently, the equation captures finer-scale dispersive effects and stronger regularization at high frequencies \cite{FIP2002, Kar1996, KS2000}.

By standard arguments, the equation \eqref{eq: biharmonic NLS} is locally well-posed in $H^s(\mathbb{R})$ for $s \ge 0$ (see, for instance, Ben-Artzi-Koch-Saut \cite{BKS2000} and Cazenave \cite{Cazenave2003}). 
The corresponding solution conserves the mass
\begin{equation}\label{eq: L2 cons law}
\mathcal{M}[u] := \|u\|_{L^2(\mathbb{R})}^2, 
\end{equation}
and if $s\geq 2$, also the energy
$$
\mathcal{E}_\epsilon[u] := \frac{1}{2}\|\partial_x u\|_{L^2(\mathbb{R})}^2 
+ \frac{\epsilon^2}{2}\|\partial_x^2 u\|_{L^2(\mathbb{R})}^2 
+ \frac{\lambda}{4}\|u\|_{L^4(\mathbb{R})}^4.
$$
Since the equation is $L^2$-subcritical, these conservation laws immediately yield global well-posedness in $L^2(\mathbb{R})$ and in $H^2(\mathbb{R})$. Moreover, by the persistence of regularity, one can show global well-posedness\footnote{By global $H^s(\mathbb{R})$-solutions, we mean solutions such that $u_\epsilon(t), u(t) \in C_t([-T,T]; H_x^s(\mathbb{R}))$ for any $T > 0$. However, the norms $\|u_\epsilon(t)\|_{H_x^s(\mathbb{R})}$ and $\|u(t)\|_{H_x^s(\mathbb{R})}$ may grow in time.} in $H^s(\mathbb{R})$ for $s>0$ (see Corollary \ref{cor: Hs norm growth}).
Concerning dynamical properties, we also refer to Pausader \cite{Pau2007, Pau2009-1, Pau2009-2} and Boulenger-Lenzmann \cite{BL2017} for related higher-dimensional results on scattering and blow-up for \eqref{eq: biharmonic NLS}.

In this article, we study the reduction of \eqref{eq: biharmonic NLS} to the standard nonlinear Schr\"odinger equation
\begin{equation}\label{eq: NLS}
\left\{\begin{aligned}
i\partial_t u &= -\partial_x^2 u + \lambda |u|^2 u,\\
u(0) &= u_0 \in H^s(\mathbb{R}).
\end{aligned}\right.
\end{equation}
in the limit $\epsilon \to 0$. In fact, it is not difficult to show that if $u_{\epsilon,0} \to u_0$ in $H^s(\mathbb{R})$ with $s \ge 0$, then there exists $T>0$ sufficiently small such that
$$
\|u_\epsilon(t) - u(t)\|_{C_t([-T,T];L_x^2(\mathbb{R}))} \to 0,
$$
where $u_\epsilon(t)$ and $u(t)$ denote the solutions to \eqref{eq: biharmonic NLS} and \eqref{eq: NLS}, respectively. Moreover, in the case $s = 2$, one can combine the mass and energy conservation laws to extend the interval of convergence arbitrarily, similarly to \cite{HJY2025}. However, when $0 < s < 2$, such a long-time extension is not immediate due to the lack of energy conservation at this level of regularity.

Our main result establishes long-time convergence in this intermediate regularity regime, where the persistence of regularity plays a crucial role.
\begin{theorem}[Long-time reduction for biharmonic NLS]\label{thm: main theorem}
For $0 < s < 2$, suppose that $u_0\in H_x^s$ and 
$$
\sup_{\ep\in(0,1]}\|u_{\ep,0}\|_{H_x^s}<\infty,
$$
and let $u_\epsilon(t)$ (resp., $u(t)$) be the global $H^s(\mathbb{R})$-solution to the biharmonic NLS \eqref{eq: biharmonic NLS} (resp., the NLS \eqref{eq: NLS}) with initial data $u_{\ep,0}$ (resp., $u_0$). Then, there exist constants $C_0, C_s>0$, independent of $\ep\in(0,1]$ and $t\in\R$, such that for all $t \in \mathbb{R}$, 
\begin{equation}\label{eq: convergence bound}
\|u_\epsilon(t) - u(t)\|_{L_x^2(\mathbb{R})}
\leq
C_s\Big(\epsilon^{\frac{s}{2}} + \|u_{\epsilon,0} - u_0\|_{L^2(\mathbb{R})}\Big) e^{C_0|t|}.
\end{equation}
\end{theorem}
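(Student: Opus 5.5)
The plan is to run a local-in-time difference estimate on unit-length intervals whose size depends only on conserved or uniformly controlled quantities, and then iterate. The key input is persistence of regularity (Corollary \ref{cor: Hs norm growth}): since mass is conserved and the equation is $L^2$-subcritical, the local existence time in $L^2$ depends only on $\|u_{\ep,0}\|_{L^2}$. On each such interval the $H^s$ norm grows at most by a fixed factor. Iterating gives $\|u_\ep(t)\|_{H^s}\le C e^{C|t|}$, uniformly in $\ep\in(0,1]$, and similarly for $u(t)$. The Strichartz estimates for $e^{-it(-\pa_x^2+\ep^2\pa_x^4)}$ must be uniform in $\ep$. In one dimension, the admissible pair $(4,\infty)$ for $-\pa_x^2$ persists, because the phase $\xi^2+\ep^2\xi^4$ has second derivative $2+12\ep^2\xi^2\ge 2$. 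Van der Corput therefore gives a dispersive bound $|t|^{-1/2}$ uniformly in $\ep$, and hence uniform Strichartz estimates.

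Next, write the equation for the difference $w=u_\ep-u$:
$$
i\pa_t w=(-\pa_x^2+\ep^2\pa_x^4)w+\ep^2\pa_x^4u+\lambda(|u_\ep|^2u_\ep-|u|^2u).
$$
The forcing $\ep^2\pa_x^4 u$ is not directly usable, since $u$ is only in $H^s$. Instead, compare the two linear propagators in Duhamel form. Write $u(t)=S_0(t)u_0-i\lambda\int_0^t S_0(t-\tau)|u|^2u\,d\tau$ and the analogous formula for $u_\ep$ with $S_\ep$. The difference then splits into three terms:
\begin{enumerate}
\item $S_\ep(t)(u_{\ep,0}-u_0)$;
\item $(S_\ep(t)-S_0(t))u_0$, together with the corresponding Duhamel term involving $(S_\ep-S_0)|u|^2u$;
\item $\int_0^t S_\ep(t-\tau)\big(|u_\ep|^2u_\ep-|u|^2u\big)\,d\tau$.
\end{enumerate}

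The key linear bound is
$$
\|(S_\ep(t)-S_0(t))f\|_{L^2}\le \min\big(2,|t|\ep^2\xi^4\big)\text{ multiplier}\lesssim (|t|\ep^2)^{s/4}\|f\|_{\dot H^s}.
$$
On a unit interval this gives $\ep^{s/2}\|f\|_{H^s}$, which is exactly the claimed rate. The Duhamel term involving $(S_\ep-S_0)|u|^2u$ requires an $H^s$ bound on $|u|^2u$ in a Strichartz-dual norm. I would handle it with the fractional Leibniz rule, using $\|u\|_{L^4_tL^\infty_x}$ and $\|u\|_{L^\infty_tH^s}$, which are controlled on unit intervals by the persistence bound. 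Alternatively, since $s<2$, one can put the $H^s$ regularity on the whole nonlinearity and use that $H^s$ is an algebra when $s>1/2$. For small $s$ I would rely on the Leibniz rule with Strichartz norms. The nonlinear difference term is bounded, on an interval $J$ of length $\delta$, by $C\delta^{1/2}(\|u_\ep\|^2+\|u\|^2)\|w\|$ in $L^\infty L^2\cap L^4L^\infty$. Here the norms of $u_\ep$ and $u$ depend only on mass, so $\delta$ can be chosen uniformly to absorb this term.

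Putting the pieces together, on $[t_k,t_k+\delta]$ we get
$$
\|w\|_{L^\infty L^2}\le C\|w(t_k)\|_{L^2}+C\ep^{s/2}\sup_{J}\big(\|u\|_{H^s}+\|u_\ep\|_{H^s}\big).
$$
This requires restarting Duhamel at $t_k$ with data $u_\ep(t_k)$ and $u(t_k)$, and applying the linear-difference bound to $u(t_k)$, whose $H^s$ norm is at most $Ce^{Ct_k}$. Iterating over $|t|/\delta$ steps, where $C$ and $\delta$ are independent of $\ep$, yields a geometric growth $C^{|t|/\delta}$ together with the accumulated forcing $\ep^{s/2}e^{C|t|}$. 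Summing gives \eqref{eq: convergence bound} with $C_0$ depending on the mass bounds.

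The main obstacle I expect is establishing that the $H^s$ growth is at most exponential and uniform in $\ep$, with constants depending only on mass; this rests on Corollary \ref{cor: Hs norm growth} together with the uniform Strichartz estimates. The other delicate point is ensuring that the constant in each step multiplies $\|w(t_k)\|$ with a fixed $C$, so that the overall growth is exponential rather than worse.
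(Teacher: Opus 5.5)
Your proposal is correct. Its global architecture matches the paper's: the same four-term Duhamel splitting, a local convergence step on intervals of length $\delta=\delta(R_0)$ fixed by mass, exponential $H^s$ growth from persistence of regularity, and geometric iteration. The difference is in how you close the local step.

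The paper measures $u_\ep-u$ in $X_T^0=L_T^\infty L_x^2\cap L_T^8L_x^4$, because it bounds the nonlinear difference through $\|u_\ep-u\|_{L_T^8L_x^4}$. The forcing terms $(S_\ep-S)u_0$ and $\int_0^t(S_\ep(t-t')-S(t-t'))(|u|^2u)(t')\,dt'$ must then be controlled in Strichartz norms. That is why the paper introduces the interpolated flows $S_\ep^\theta$ with $\theta$-uniform Strichartz estimates, and writes $S_\ep-S$ as a $\theta$-integral carrying the factor $t\ep^2\pa_x^4$. It also has to split at $N_0=\frac12\ep^{-1/2}$, since that factor is $\ls\ep^{s/2}|\xi|^s$ only below $N_0$.

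You close in $L_t^\infty L_x^2$ alone and put $u_\ep,u$ in $L_t^4L_x^\infty$. Since $\||u|^2w\|_{L^1_JL^2_x}\le\delta^{1/2}\|u\|_{L^4_JL^\infty_x}^2\|w\|_{L^\infty_JL^2_x}$, the forcing terms are needed only in $L^2$ at each fixed time. The multiplier bound $|e^{-it\ep^2\xi^4}-1|\le 2(|t|\ep^2\xi^4)^{s/4}$ then replaces the paper's estimate for localized linear flows outright, with no frequency cut-off.

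The price is the pair $(4,\infty)$, which the paper excludes because its dyadic proof relies on Littlewood--Paley theory. Your justification holds: van der Corput applies on all of $\R$ with $|\phi''|\ge 2|t|$ and gives the uniform dispersive bound directly. The inhomogeneous estimates from $L^1_tL^2_x$ follow from the homogeneous ones by Minkowski. You also need $\||u|^2u\|_{H^s}\ls\|u\|_{L^\infty}^2\|u\|_{H^s}$, which is Kato--Ponce with $L^\infty$ endpoints and standard.

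One phrasing fix: your sentence on the nonlinear difference suggests measuring $w$ in $L^\infty L^2\cap L^4L^\infty$. Do not do that. Your multiplier bound gives no $L_t^4L_x^\infty$ control of $(S_\ep-S)u_0$, and as shown above none is needed.

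Your route is more elementary. The paper's gives convergence in the stronger norm $X_T^0$, which the theorem does not require.
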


\begin{remark}
(i) If two solutions evolve from the same initial data $u_0$, then the convergence bound \eqref{eq: convergence bound} implies that they remain close up to times of order $\log \frac{1}{\epsilon}$.\\
(ii) To be precise, the exponential rate $C_0$ in Theorem \ref{thm: main theorem} can be chosen depending only on $a$ and the lower-regularity bounds $\|u_0\|_{L_x^2}$ and $\displaystyle\sup_{\ep\in(0,1]}\|u_{\ep,0}\|_{L_x^2}$, whereas $C_s$ may additionally depend on the higher-regularity bounds $\|u_0\|_{H_x^s}$ and $\displaystyle\sup_{\ep\in(0,1]}\|u_{\ep,0}\|_{H_x^s}$.
\end{remark}

\begin{remark}[Conservation laws and long-time iteration]
(i) In settings where conservation laws are available (see, for instance, \cite{HY2019, HJY2025}), one can obtain uniform-in-time bounds for the solutions. Consequently, long-time convergence with an exponential-in-time bound of the form \eqref{eq: convergence bound} follows by iterating the local-in-time convergence result. \\
(ii) On the other hand, in the absence of suitable conservation laws, as in the present article, persistence of regularity may yield only exponentially growing bounds for the solutions. To establish global-in-time convergence with an exponential-in-time bound, one might expect that such weak control needs to be improved by employing the \textit{I-method} or the \textit{upside-down I-method} \cite{Vedran2011-1,Vedran2011-2,Vedran2012, CKSTT2001, CKSTT2003-1, CKSTT2003-2}. 
\end{remark}
\begin{remark}[Persistence of regularity]
(i) A key observation of this work, however, is that such a refinement through delicate analysis is unnecessary. In fact, a much simpler argument based on persistence of regularity suffices to obtain the desired result. 
Indeed, the persistence of regularity allows the $H^s$ regularity to be propagated on successive time intervals whose length is determined solely by the conserved $L^2$ norm. Although this argument yields an exponential-in-time $H^s$ bound, it is still sufficient to iterate the local-in-time convergence result and prove Theorem~\ref{thm: main theorem}. The exponentially growing bounds for the solutions are less harmful than one might expect, as the differences are not estimated solely via Gr\"onwall's inequality. The only cost of allowing growing solution bounds is an increase in the constant $C_0$ in the convergence estimate \eqref{eq: convergence bound}. 
Thus, unlike the arguments in \cite{HY2019, HJY2025}, which use conservation laws to obtain uniform-in-time bounds, the long-time reduction is obtained without either an $H^s$ conservation law or a modified energy method.\\
(ii) We expect that this approach extends more broadly. The reason for considering the biharmonic NLS here is that it provides a simple mathematical setting in which the main idea can be clearly illustrated without inessential technical complications, while still retaining physical relevance. In particular, one can avoid the use of local smoothing norms, since Strichartz estimates suffice. In future work, we plan to extend this observation to more physically relevant models, including the Boussinesq equation \cite{HY2024}, the Fermi--Pasta--Ulam system \cite{HKY2021, KY2025}, and possibly water wave (and related) problems \cite{SW2000, BCL2005}. These systems typically possess conservation laws that control low-regularity quantities such as the $L^2$-norm, but lack conservation laws governing higher Sobolev norms.
\end{remark}

\subsection{Outline of the paper}
The rest of this paper is organized as follows. In Section~\ref{sec: prelim}, we establish uniform Strichartz estimates and estimates for the difference of localized linear flows. In Section~\ref{sec: persistence of regularity LWP}, we prove uniform local well-posedness with persistence of regularity and derive exponential $H^s$--bounds. Finally, in Section~\ref{sec: main thm proof}, we combine the local-in-time convergence result with the persistence of regularity to prove our main theorem.

\subsection{Acknowledgement}
This work was supported by the National Research Foundation of Korea(NRF) grant funded by the Korean government (MSIT) (No. RS-2023-00219980 and RS-2026-25479401).

\section{Preliminaries}\label{sec: prelim}

For $\theta \in [0,1]$, we consider the Schr\"odinger flow
$$
S_\epsilon^\theta(t)
:=e^{it(\partial_x^2-\theta\epsilon^2 \partial_x^4)},
$$
which interpolates between the standard free Schr\"odinger flow $S(t):=S_\epsilon^0(t)=e^{it\partial_x^2}$ and the biharmonic free Schr\"odinger flow $S_\epsilon(t) := S_\epsilon^1(t)=e^{it(\partial_x^2-\epsilon^2 \partial_x^4)}$. For convenience, we denote 
\begin{equation}
s_\ep^\theta(\xi)
=\xi^2+\theta\ep^2\xi^4,\quad s(\xi)=\xi^2\quad\textup{and}\quad s_\ep(\xi)=\xi^2+\ep^2\xi^4,
\end{equation}
so that
$$S_\epsilon^\theta(t)=e^{-its_\epsilon^\theta(-i\partial_x)},\quad S(t)=e^{-its(-i\partial_x)}\quad\textup{and}\quad S_\epsilon(t)=e^{-its_\epsilon(-i\partial_x)}.$$
In this section, we summarize some basic properties of this interpolated Schr\"odinger flow.

\subsection{Strichartz estimates}
We say that $(q,r)$ is \textit{admissible} if $4< q\leq \infty$, $2\leq r< \infty$ and
$$\frac{2}{q}+\frac{1}{r}=\frac{1}{2}.$$

\begin{lemma}[Strichartz estimates]\label{lem: Strichartz}
Let $\epsilon\geq0$ and $\theta\in[0,1]$. For admissible pairs $(q,r)$ and $(\tilde{q},\tilde{r})$, we have
\begin{equation}\label{ineq: Strichartz 1}
\|S_\epsilon^\theta(t)u_0\|_{L_t^q(\mathbb{R}; L_x^r(\mathbb{R}))}\lesssim \|u_0\|_{L^2(\mathbb{R})},
\end{equation}
and
\begin{equation}\label{ineq: Strichartz 2}
\bigg\|\int_0^tS_\epsilon^\theta(t-t_1)F(t_1)dt_1\bigg\|_{L_t^q(\mathbb{R}; L_x^r(\mathbb{R}))}\lesssim \|F\|_{L_t^{\tilde{q}'}(\mathbb{R}; L_x^{\tilde{r}'}(\mathbb{R}))}.
\end{equation}
\end{lemma}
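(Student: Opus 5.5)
The plan is the standard route: a uniform dispersive estimate followed by the Keel--Tao machinery. Energy and unitarity give
$$\|S_\epsilon^\theta(t)u_0\|_{L^2}=\|u_0\|_{L^2},$$
since $s_\epsilon^\theta$ is real. The key step is the dispersive bound
$$\|S_\epsilon^\theta(t)u_0\|_{L^\infty_x}\lesssim |t|^{-1/2}\|u_0\|_{L^1_x},$$
with an implicit constant independent of $\epsilon\ge0$ and $\theta\in[0,1]$. Once this is in hand, the $TT^*$ argument (or Keel--Tao) yields \eqref{ineq: Strichartz 1} for all admissible $(q,r)$ with $q>4$; the non-endpoint requirement $q>4$ in 1D avoids $q=4$ issues, which are harmless anyway. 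The inhomogeneous estimate \eqref{ineq: Strichartz 2}, with two different admissible pairs, then follows from the Christ--Kiselev lemma, which applies because $\tilde q'<2<4<q$, i.e. $\tilde q'<q$. Alternatively, one can use the direct Keel--Tao bilinear argument for the retarded operator.

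For the dispersive estimate, write the kernel
$$K_t(x)=\frac{1}{2\pi}\int_{\mathbb{R}} e^{i(x\xi - t\xi^2-t\theta\epsilon^2\xi^4)}\,d\xi,$$
interpreted as an oscillatory integral or limit. The phase is $\phi(\xi)=x\xi-t s_\epsilon^\theta(\xi)$, and
$$\phi''(\xi)=-t\,(2+12\theta\epsilon^2\xi^2),$$
so $|\phi''(\xi)|\ge 2|t|$ for every $\xi$, uniformly in $\epsilon,\theta$. By the van der Corput lemma (second-derivative version), $|K_t(x)|\lesssim |t|^{-1/2}$ with a universal constant. To handle the non-compact integral rigorously, I would insert a smooth cutoff $\chi(\xi/R)$. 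Van der Corput with the amplitude bound, where total variation of $\chi(\cdot/R)$ is $O(1)$, gives a bound uniform in $R$, and letting $R\to\infty$ gives the kernel bound in the distributional sense. Convexity of the phase derivative, i.e. monotonicity of $\phi'$ since $\phi''$ has one sign, is what makes the second-derivative van der Corput applicable globally on $\mathbb{R}$.

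The main obstacle is precisely the uniformity of the dispersive constant in $\epsilon$ and $\theta$. The lower bound $|\phi''|\ge 2|t|$ resolves it: the fourth-order term only increases convexity and never degrades the Schrödinger decay. Therefore nothing beyond the classical $L^1\to L^\infty$ decay rate is needed. The remaining steps are routine interpolation, $TT^*$ via Hardy--Littlewood--Sobolev in time, and Christ--Kiselev.
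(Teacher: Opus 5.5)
Your proof is correct, but it takes a different route from the paper's.

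\textbf{The paper's route.} The paper never proves the full dispersive estimate. It decomposes dyadically in frequency and rescales each piece. Van der Corput is then applied to a compactly supported integral with $\bigl(s_\epsilon^\theta(2^k\xi)\bigr)''\geq 2^{2k}$, giving $\|S_\epsilon^\theta(t)P_k u_0\|_{L^\infty}\lesssim|t|^{-1/2}\|u_0\|_{L^1}$ uniformly in $k$. It runs $TT^*$ (Keel--Tao) on each dyadic piece, for both the homogeneous and the retarded estimate. Finally it reassembles the pieces with the Littlewood--Paley square function and Minkowski's inequality, using $q,r\geq 2$ and $\tilde q',\tilde r'\leq 2$. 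That reassembly step is what forces $r<\infty$ and $\tilde r'>1$, and hence the exclusion of $(4,\infty)$.

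\textbf{Your route.} You observe that $|\phi''|\geq 2|t|$ holds globally, uniformly in $\epsilon,\theta$. The dyadic rescaling in the paper just reproduces this, since $2^k(|t|2^{2k})^{-1/2}=|t|^{-1/2}$ exactly. You handle the non-absolutely-convergent kernel with the cutoff $\chi(\xi/R)$ and the amplitude form of van der Corput. This works because the van der Corput constant is independent of the interval and $\int|\partial_\xi\chi(\xi/R)|\,d\xi=\|\chi'\|_{L^1}$ is independent of $R$. You then apply $TT^*$ with Hardy--Littlewood--Sobolev to the whole flow, and Christ--Kiselev for mixed pairs, which is valid since $\tilde q'<4/3<q$.

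\textbf{What each buys.} Your argument avoids Littlewood--Paley theory entirely. It also gives the endpoint pair $(4,\infty)$, since the Keel--Tao endpoint is harmless in one dimension, so the restriction $q>4$ in the lemma becomes unnecessary. The paper's route keeps every oscillatory integral on a compact set, so no limiting argument in $R$ is needed. Its dyadic structure would only become essential if the phase had merely a frequency-dependent convexity bound, which is not the case here.

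One small correction: the second-derivative van der Corput lemma needs only $|\phi''|\geq\lambda$ on the interval. Monotonicity of $\phi'$ is automatic and is not an extra ingredient.
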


\begin{proof}[Sketch of the proof]
The linear flow admits the integral representation
$$
S_\epsilon^\theta(t)u_0(x) 
=
\frac{1}{2\pi}\int_{-\infty}^\infty\bigg\{\int_{-\infty}^\infty e^{-its_\ep^\theta(\xi)}e^{i(x-y)\xi}\, d\xi\bigg\} u_0(y)\, dy.
$$
Let $\chi\in C_c^\infty(0,\infty)$ be a smooth cutoff satisfying $\operatorname{supp}\chi \subset \big(\tfrac{1}{2},2\big)$ and $\sum_{k=-\infty}^\infty \chi\big(\frac{|\xi|}{2^k}\big) \equiv 1$. Then, the integral kernel admits the dyadic decomposition
$$
\begin{aligned}
\int_{-\infty}^\infty e^{-its_\ep^\theta(\xi)}e^{i(x-y)\xi}\, d\xi
&= \sum_{k=-\infty}^\infty \int_{-\infty}^\infty e^{-its_\ep^\theta(\xi)}e^{i(x-y)\xi}\chi\bigg(\frac{|\xi|}{2^k}\bigg)\, d\xi \\
&= \sum_{k=-\infty}^\infty 2^k\int_{-\infty}^\infty e^{-its_\ep^\theta(2^k\xi)}e^{i2^k(x-y)\xi}\chi(|\xi|) \, d\xi.
\end{aligned}
$$
Note that for each integral in the sum, the phase function $s_\ep^\theta(2^k\xi)$ is uniformly nondegenerate, since
$$
\Big(s_\ep^\theta(2^k\xi)\Big)'' = 2^{2k}\big(2+12\theta\epsilon^2 2^{2k}\xi^2\big)\geq2^{2k}.
$$
Consequently, by the Van der Corput lemma, we have
\begin{equation}\label{ineq: kernel estimate}
\bigg|2^k\int_{-\infty}^\infty e^{-its_\ep^\theta(2^k\xi)}e^{i2^k(x-y)\xi}\chi(|\xi|)\, d\xi\bigg|
\ls \frac{1}{|t|^{1/2}}.
\end{equation}
Let $P_k$ denote the Littlewood-Paley projection defined by
$$
\widehat{P_kf}(\xi):=\chi\bigg(\frac{|\xi|}{2^k}\bigg)\hat{f}(\xi).
$$
Then \eqref{ineq: kernel estimate} yields the dispersive estimate
$$
\big\|S_\ep^\theta(t)P_ku_0\big\|_{L_x^\infty}
\ls |t|^{-1/2}\|u_0\|_{L_x^1},
$$
where the implicit constant is independent of $k$, $\ep$ and $\theta$. Interpolating this estimate with the trivial inequality $\|S_\ep^\theta(t)P_ku_0\|_{L_x^2}\leq\|P_ku_0\|_{L_x^2}$, and applying the standard $TT^*$ argument in \cite{KT1998}, we obtain
$$
\|S_\ep^\theta(t)P_ku_0\|_{L_t^qL_x^r}\ls \|P_ku_0\|_{L_x^2},
$$
and
$$
\bigg\|\int_0^tS_\ep^\theta(t-t_1)P_kF(t_1)dt_1\bigg\|_{L_t^qL_x^r}\ls \|P_kF\|_{L_t^{\tilde{q}'}L_x^{\tilde{r}'}}.
$$
The implicit constants are independent of $k$, $\ep$ and $\theta$. 
Since $q,r \geq 2$ and $r<\infty$, the Littlewood–Paley inequality and Minkowski’s inequality imply
$$
\|S_\ep^\theta(t)u_0\|_{L_t^qL_x^r}
\ls
\bigg(\sum_{k\in\Z}\|S_\ep^\theta(t)P_ku_0\|_{L_t^qL_x^r}^2\bigg)^{\frac{1}{2}}
\ls
\bigg(\sum_{k\in\Z}\|P_ku_0\|_{L_x^2}^2\bigg)^{\frac{1}{2}}
\ls \|u_0\|_{L_x^2}.
$$
Similarly, since $\tilde{q}'\leq2$ and $1<\tilde{r}'\leq2$, we obtain
$$
\begin{aligned}
\bigg\|\int_0^tS_\ep^\theta(t-t_1)F(t_1)dt_1\bigg\|_{L_t^qL_x^r}
&\ls
\bigg(\sum_{k\in\Z}\bigg\|\int_0^tS_\ep^\theta(t-t_1)P_kF(t_1)dt_1\bigg\|_{L_t^qL_x^r}^2\bigg)^{\frac{1}{2}}\\
&\ls
\bigg(\sum_{k\in\Z}\|P_kF\|_{L_t^{\tilde{q}'}L_x^{\tilde{r}'}}^2\bigg)^{\frac{1}{2}}
\ls
\|F\|_{L_t^{\tilde{q}'}L_x^{\tilde{r}'}}.
\end{aligned}
$$
Here, the pair $(4,\infty)$ is excluded from the admissible range, since the Littlewood--Paley inequality fails in $L^\infty(\mathbb{R})$.
\end{proof}

\subsection{Convergence of the linear flow}
Next, we show the convergence of the linear flow $S_\epsilon(t)\to S(t)$ within the low frequency window $|\xi|\leq N_0:=\frac{1}{2}\ep^{-\frac{1}{2}}$. For the statement, let $P_{\leq N}$ be the sharp frequency cut-off defined by $\widehat{P_{\leq N}}f=\mathbbm{1}_{|\xi|\leq N}\hat{f}(\xi)$. For $T>0$ and $s\geq0$, we define the solution space by 
$$
X_T^s:=L_t^\infty([-T,T];H_x^s(\R))\cap L_t^8([-T,T];W_x^{s,4}(\R)),
$$
equipped with the norm (we denote $L_T^p:=L_t^p([-T,T])$)
$$
\|u\|_{X_T^s}:=\|u\|_{L_T^\infty H_x^s}+\|u\|_{L_T^8 W_x^{s,4}}.
$$

\begin{lemma}[Estimates for difference of localized linear flow]
Let $N_0:=\frac{1}{2}\ep^{-\frac{1}{2}}$. Then, for $0<T\leq 1$, $0<\ep\leq1$ and $0\leq s \leq 4$, we have
\begin{equation}\label{ineq: freq loc diff est 1}
\big\|\big(S_\ep(t)-S(t)\big)P_{\leq N_0}u_0\big\|_{X_T^0}
\ls
\ep^{\frac{s}{2}} T\|u_0\|_{H_x^s},
\end{equation}
and, if $(q,r)$ is admissible,  
\begin{equation}\label{ineq: freq loc diff est 2}
\bigg\|\int_0^t\big(S_\ep(t-t_1)-S(t-t_1)\big)(P_{\leq N_0}F)(t_1) \, dt_1
\bigg\|_{X_T^0}
\ls 
\ep^{\frac{s}{2}}T\|F\|_{L_T^{q'}W_x^{s,r'}}.
\end{equation}
\end{lemma}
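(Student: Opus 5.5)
The idea is to write the difference of flows as an integral in the interpolation parameter $\theta$, so that Lemma~\ref{lem: Strichartz}, which is uniform in $\theta\in[0,1]$, can be applied to each piece. Differentiating $S_\ep^\theta(t)=e^{-its_\ep^\theta(-i\pa_x)}$ in $\theta$ gives
$$
S_\ep(t)-S(t)=\int_0^1 \frac{d}{d\theta}S_\ep^\theta(t)\,d\theta = -it\ep^2\int_0^1 S_\ep^\theta(t)\,\pa_x^4\,d\theta .
$$
Hence for \eqref{ineq: freq loc diff est 1} we get
$$
\big(S_\ep(t)-S(t)\big)P_{\leq N_0}u_0=-it\ep^2\int_0^1 S_\ep^\theta(t)\pa_x^4P_{\leq N_0}u_0\,d\theta .
$$
On $[-T,T]$ we have $|t|\le T$. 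For the $L_T^\infty L_x^2$ part, unitarity gives the bound $T\ep^2\|\pa_x^4P_{\le N_0}u_0\|_{L^2}$. For the $L_T^8L_x^4$ part we cannot pull $t$ out of a space–time norm directly. However, the factor $t$ is only a scalar multiplying the function in $t$, so $\|t\,G(t)\|_{L^8_T L^4_x}\le T\|G\|_{L^8_TL^4_x}$ pointwise in $t$. By Minkowski in $\theta$ and \eqref{ineq: Strichartz 1} with $(q,r)=(8,4)$, this gives again $T\ep^2\|\pa_x^4P_{\le N_0}u_0\|_{L^2}$.

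The frequency localization then converts $\ep^2$ derivatives into $\ep^{s/2}$. On $|\xi|\le N_0=\tfrac12\ep^{-1/2}$ and for $0\le s\le4$,
$$
\ep^2|\xi|^4=\ep^2|\xi|^{4-s}|\xi|^s\le \ep^2 N_0^{4-s}|\xi|^s\lesssim \ep^{2-(4-s)/2}|\xi|^s=\ep^{s/2}|\xi|^s .
$$
Therefore $\ep^2\|\pa_x^4P_{\le N_0}u_0\|_{L^2}\lesssim \ep^{s/2}\|u_0\|_{H^s}$, which proves \eqref{ineq: freq loc diff est 1}.

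For \eqref{ineq: freq loc diff est 2} we use the same identity inside the Duhamel integral:
$$
\int_0^t\big(S_\ep(t-t_1)-S(t-t_1)\big)P_{\le N_0}F(t_1)\,dt_1=-i\ep^2\int_0^1\int_0^t (t-t_1)\,S_\ep^\theta(t-t_1)\pa_x^4P_{\le N_0}F(t_1)\,dt_1\,d\theta.
$$
The factor $t-t_1$ is the main obstacle, since it is not a pure function of $t$. We split $t-t_1 = t - t_1$ into two terms. In the first, $t$ can be pulled outside the $t_1$-integral and bounded by $T$ pointwise in $t$. In the second, $t_1$ is absorbed into the forcing, replacing $F$ by $t_1F(t_1)$, whose $L_T^{q'}$ norm is at most $T\|F\|_{L_T^{q'}}$. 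Each term is then an inhomogeneous Strichartz integral, so \eqref{ineq: Strichartz 2}, uniform in $\theta$, applies. We use it with target pairs $(\infty,2)$ and $(8,4)$ and source pair $(q,r)$, which bounds each term by
$$
T\ep^2\|\pa_x^4P_{\le N_0}F\|_{L_T^{q'}L_x^{r'}} .
$$

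Finally we pass to $\ep^{s/2}\|F\|_{L_T^{q'}W_x^{s,r'}}$. The operator $\ep^2\pa_x^4P_{\le N_0}|\pa_x|^{-s}$ has symbol $\ep^{s/2}m(\ep^{1/2}\xi)$ with $m(\eta)=\eta^{4-s}\mathbbm{1}_{|\eta|\le1/2}$, up to constants. With the sharp cutoff this is not obviously bounded on $L^{r'}$ for $r'\ne2$. In one dimension the sharp cutoff is a combination of modulated Hilbert transforms, so $P_{\le N_0}$ is bounded on $L^p$ for $1<p<\infty$ uniformly in $N_0$; note that $r'\in(1,2]$. Writing $m=\eta^{4-s}\tilde\chi(\eta)\mathbbm 1_{|\eta|\le 1/2}$ with a smooth compact $\tilde\chi$, the factor $\eta^{4-s}\tilde\chi(\eta)$ is a Mikhlin multiplier for $0\le s\le4$. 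Rescaling by $\ep^{1/2}$ preserves these bounds, which gives the required estimate with constants independent of $\ep$.
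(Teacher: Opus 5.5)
Your proof is correct and is essentially the paper's argument: the $\theta$-interpolation identity, Minkowski in $\theta$, the $\theta$-uniform Strichartz estimates, and the symbol bound $\ep^2|\xi|^4\le \ep^2N_0^{4-s}|\xi|^s\lesssim \ep^{s/2}|\xi|^s$ on $|\xi|\le N_0$. Two of your steps justify points the paper only asserts, namely pulling $|t-t_1|\le T$ out of the Duhamel integral and passing from the pointwise symbol bound on $L^2$ to $L^{r'}$ with $r'\neq 2$: your splitting of $t-t_1$ handles the first, and your multiplier argument for the sharp cutoff handles the second (write $|\eta|^{4-s}$ rather than $\eta^{4-s}$, and add the standard bound $\||\partial_x|^sF\|_{L^{r'}}\lesssim\|F\|_{W^{s,r'}}$).
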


\begin{proof}
Since $N_0=\frac{1}{2}\ep^{-\frac{1}{2}}$, we have
\begin{equation}\label{ineq: phase function difference}
|s_\ep(\xi)-s(\xi)|
=\ep^2|\xi|^4
\leq\ep^2N_0^{4-s}|\xi|^s
\ls \ep^{\frac{s}{2}}|\xi|^s\quad 
\textup{where} \quad |\xi|\leq N_0. 
\end{equation}
Moreover, by the fundamental theorem of calculus, we have
$$
e^{-its_\epsilon(\xi)}-e^{-its(\xi)}=\int_0^1 \frac{d}{d\theta}e^{-its_\epsilon^\theta(\xi)} d\theta
=
\int_0^1e^{-its_\epsilon^\theta(\xi)}(-it\ep^2\xi^4) \, d\theta.
$$
Thus, by the Minkowski inequality, \eqref{ineq: phase function difference} and Lemma~\ref{lem: Strichartz}, we prove 
$$
\begin{aligned}
\big\|\big(S_\ep(t)-S(t)\big)P_{\leq N_0}u_0\big\|_{X_T^0}
\leq
\int_0^1
\big\|tS_\epsilon^\theta(t)\ep^2\pa_x^4 P_{\leq N_0}u_0\big\|_{X_T^0} \, d\theta
\ls
\ep^{\frac{s}{2}} T\int_0^1\|u_0\|_{H_x^s} \, d\theta
\ls
\ep^{\frac{s}{2}} T\|u_0\|_{H_x^s}.
\end{aligned}
$$
Similarly, for \eqref{ineq: freq loc diff est 2}, since $|t-t_1|\leq T$ on $t\in[-T,T]$, we can show
$$
\begin{aligned}
&\bigg\|
\int_0^t(S_\ep(t-t_1)-S(t-t_1))(P_{\leq N_0}F)(t_1) \, dt_1
\bigg\|_{X_T^0}\\
&\leq
T\int_0^1
\bigg\|
\int_0^tS_\epsilon^\theta(t-t_1)\ep^2\pa_x^4(P_{\leq N_0}F)(t_1) \, dt_1
\bigg\|_{_{X_T^0}}\, d\theta \\
&\ls
T\|\ep^2\pa_x^4P_{\leq N_0}F\|_{L_T^{q'}L_x^{r'}}
\ls 
\ep^{\frac{s}{2}}T\|F\|_{L_T^{q'}W_x^{s,r'}}.
\end{aligned}
$$
\end{proof}

\section{Local well-posedness with persistence of regularity}\label{sec: persistence of regularity LWP}

In this section, we prove the local well-posedness of the biharmonic NLS \eqref{eq: biharmonic NLS}, with an emphasis on persistence of regularity and uniformity in $\epsilon \in (0,1]$. 
By persistence of regularity, we mean that the existence time in a higher-regularity local well-posedness result can be taken to be the same as that in a lower-regularity well-posedness result.

\begin{proposition}[Uniform local well-posedness for biharmonic NLS with persistence of regularity]\label{prop: persistence of regularity}
Let $s\geq 0$ and assume that
$$
\sup_{\ep\in(0,1]}\|u_{\ep,0}\|_{L_x^2}=: R_0,
\qquad
\sup_{\ep\in(0,1]}\|u_{\ep,0}\|_{H_x^s}=: R_s.
$$
Then, there exist $T=T(R_0)\sim R_0^{-4}>0$ and $C_s=C_s(R_0)>0$, independent of $\ep$ and $R_s$, and a unique solution $u_\ep\in C_t([-T,T];H_x^s(\R))\cap L_t^8([-T,T];W_x^{s,4}(\R))$ to \eqref{eq: biharmonic NLS} such that
$$
\|u_\ep\|_{L_T^\infty L_x^2}+\|u_\ep\|_{L_T^8 L_x^4}\leq C_0 R_0,
\qquad
\|u_\ep\|_{L_T^\infty H_x^s}+\|u_\ep\|_{L_T^8 W_x^{s,4}} \leq C_s R_s.
$$
\end{proposition}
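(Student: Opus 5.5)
We argue by a contraction mapping in the space $X_T^0$ for the $L^2$ theory, followed by a linear-in-the-high-norm estimate in $X_T^s$ on the same time interval. Consider the Duhamel map
\[
\Phi(u)(t) := S_\ep(t)u_{\ep,0} - i\lambda\int_0^t S_\ep(t-t_1)\big(|u|^2u\big)(t_1)\,dt_1 .
\]
By Lemma~\ref{lem: Strichartz} with $\theta=1$, using the admissible pair $(8,4)$ on the left and the dual pair $(8/7,4/3)$ on the right, all constants are uniform in $\ep\in(0,1]$. The nonlinear term is estimated by Hölder:
\[
\||u|^2u\|_{L_T^{8/7}L_x^{4/3}} \le \|u\|_{L_T^{8/7}L_x^4}\|u\|_{L_T^\infty L_x^2}\|u\|_{L_T^\infty L_x^\infty}?
\]
That route is awkward because of the $L^\infty$ factor. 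Instead, I would take the dual pair $(\tilde q',\tilde r')=(8/7,4/3)$ and split $|u|^2u$ as $u\cdot u\cdot u$ with exponents $L_x^4\cdot L_x^4\cdot L_x^4 \subset L_x^{4/3}$. In time this requires $L_T^{8/7}$ from three $L_T^8$ factors, and Hölder gives
\[
\||u|^2u\|_{L_T^{8/7}L_x^{4/3}} \le T^{1/2}\|u\|_{L_T^8L_x^4}^3 .
\]
Hence
\[
\|\Phi(u)\|_{X_T^0}\le C R_0 + C T^{1/2}\|u\|_{X_T^0}^3 ,
\]
and similarly for differences. Choosing $T\sim R_0^{-4}$ small makes $\Phi$ a contraction on the ball of radius $2CR_0$ in $X_T^0$. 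This yields the solution and the bound $\|u_\ep\|_{X_T^0}\le C_0R_0$. Uniqueness holds in $X_T^0$, and hence in the smaller class of the statement.

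For persistence of regularity, we apply $\langle\partial_x\rangle^s$ to the Duhamel formula. By the fractional Leibniz rule (Kato–Ponce in the $L^{4/3}$ setting),
\[
\|\langle\partial_x\rangle^s(|u|^2u)\|_{L_x^{4/3}}\lesssim \|u\|_{W_x^{s,4}}\|u\|_{L_x^4}^2 .
\]
This estimate is \emph{linear} in the high norm and quadratic in the low norm. Combining it with Hölder in time gives
\[
\|u_\ep\|_{X_T^s}\le CR_s + C T^{1/2}\|u_\ep\|_{L_T^8L_x^4}^2\|u_\ep\|_{X_T^s}\le CR_s + C T^{1/2}(C_0R_0)^2\|u_\ep\|_{X_T^s}.
\]
Since $T^{1/2}R_0^2\sim$ a small absolute constant by the choice $T\sim R_0^{-4}$ (shrinking the implicit constant if needed), the last term can be absorbed, giving $\|u_\ep\|_{X_T^s}\le 2CR_s$. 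This requires no further dependence of $T$ on $R_s$.

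The main obstacle is justifying the absorption rigorously, since a priori we only know $u_\ep\in X_T^0$. I would handle this in one of two ways. The first is to run the contraction directly in the set
\[
\{u:\ \|u\|_{X_T^0}\le 2CR_0,\ \|u\|_{X_T^s}\le 2CR_s\}
\]
with the $X_T^0$ metric, which is complete by weak-$*$ lower semicontinuity of the $X_T^s$ norm. The estimates above show that $\Phi$ maps this set to itself with the same $T$. The second is to approximate by smooth data and pass to the limit. I would take the first route. Continuity in time with values in $H^s$ then follows from the Duhamel formula and the Strichartz bounds in the standard way.
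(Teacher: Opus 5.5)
Your proposal is correct and, once you commit to the first of your two options, it is essentially the paper's proof. The paper also runs a Banach fixed-point argument on the set $\{u:\ \|u\|_{X_T^0}\le 2cR_0,\ \|u\|_{X_T^s}\le 2cR_s\}$ with the $X_T^0$ metric, which is complete by the weak-$*$ argument it cites from Cazenave. The engine is the same: the Strichartz estimate with dual pair $(8/7,4/3)$ together with the fractional Leibniz bound $T^{1/2}\|u\|_{L_T^8L_x^4}^2\|u\|_{L_T^8W_x^{s,4}}$, which is linear in the high norm and therefore lets $T\sim R_0^{-4}$ be independent of $R_s$.

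You were also right that your preliminary $X_T^0$-only contraction, followed by absorption after the fact, is not rigorous unless $u_\epsilon\in X_T^s$ is known in advance. That is exactly why the paper builds both bounds into the ball.
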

\begin{remark}[Uniform local well-posedness for cubic NLS with persistence of regularity]\label{rmk: NLS LWP}
By the analogous proof, the corresponding statement with the same $T=T(R_0)$ and $C_s=C_s(R_0)$ holds for the cubic NLS \eqref{eq: NLS}. For more details, see \cite{Cazenave2003}.
\end{remark}

\begin{proof}[Proof of Proposition~\ref{prop: persistence of regularity}]
Define the map
$$
\Phi_\ep(u_\ep)
:=
S_\ep(t)u_{\ep,0}-i\lambda\int_0^tS_\ep(t-t_1)(|u_\ep|^2u_\ep)(t_1)\, dt_1.
$$
We will show that $\Phi$ is contractive. By Lemma~\ref{lem: Strichartz}, we obtain
\begin{equation}\label{ineq: contraction bound 1}
\|\Phi_\ep(u_\ep)\|_{X_T^s}
\leq
c_1\|u_{\ep,0}\|_{H_x^s}
+c_1\|\la\pa_x\ra^s(|u_\ep|^2u_\ep)\|_{L_T^{\frac{8}{7}}L_x^{\frac{4}{3}}}
\leq
cR_s+cT^{\frac{1}{2}}\|u_\ep\|_{X_T^0}^2\|u_\ep\|_{X_T^s},  
\end{equation}
for some constants $c_1, c>1$, since the fractional Leibniz rule implies that
\begin{equation}\label{ineq: lwp proof step 1}
\|\la\pa_x\ra^s(|u_\ep|^2u_\ep)\|_{L_T^{\frac{8}{7}}L_x^{\frac{4}{3}}}
\ls
\big\|\|u_\ep\|_{L_x^4}^2\big\|_{L_T^4}\|\la\pa_x\ra^su_\ep\|_{L_T^{\frac{8}{5}}L_x^4}
\ls T^{\frac{1}{2}}\|u_\ep\|_{L_T^8L_x^4}^2\|u_\ep\|_{L_T^8W_x^{s,4}}.
\end{equation}
For the difference, using the following inequality
\begin{equation}\label{ineq: cubic difference}
\big||u_1|^2u_1-|u_2|^2u_2\big|\ls (|u_1|^2+|u_2|^2)|u_1-u_2|,
\end{equation}
we can show the following estimate from the similar computation:
\begin{equation}\label{ineq: contraction bound 2}
\|\Phi_\ep(u_\ep)-\Phi_\ep(v_\ep)\|_{X_T^0}
\leq
cT^{\frac{1}{2}}(\|u_\ep\|_{X_T^0}^2+\|v_\ep\|_{X_T^0}^2)\|u_\ep-v_\ep\|_{X_T^0}.   
\end{equation}
Now we define
$$
\mathcal{B}_c:=\{u\in X_T^s : \|u\|_{X_T^0}\leq 2cR_0, \quad \|u\|_{X_T^s}\leq 2cR_s\},
$$
equipped with the metric $d(u_1,u_2):=\|u_1-u_2\|_{X_T^0}$, so that $(\mathcal{B}_c, d)$ is a complete metric space (for example, see \cite[Theorem~1.2.5. and Proof of Theorem~4.4.1.]{Cazenave2003}). 
Take
\begin{equation}\label{eq: T choice}
T:=\frac{1}{(16c^3R_0^2)^2}.
\end{equation}
It follows from \eqref{ineq: contraction bound 1} and \eqref{ineq: contraction bound 2} that $\Phi_\ep$ maps $\mathcal{B}_c$ into itself and is a contraction on $(\mathcal{B}_c, d)$. Hence,  by the Banach fixed point theorem, there exists a unique solution $u_\ep\in X_T^s$ such that $\|u_\ep\|_{X_T^s}\leq 2cR_s.$
\end{proof}
\begin{remark}
We note that the time $T=T(R_0)$ chosen in \eqref{eq: T choice} is independent of $R_s$. Indeed, the contraction is performed in $X_T^0$, so the existence interval is determined by the lower-regularity bound $R_0$, while \eqref{ineq: contraction bound 1} provides the $X_T^s$-bound on the same interval. Thus, the local existence interval determined at the $L^2$-level remains valid at the $H^s$-level. This uniformity of the existence time is used below to derive the exponential $H^s$-bound and to iterate the local-in-time convergence estimate.
\end{remark}

\begin{corollary}[Exponential bound of the $H^s$--norm]\label{cor: Hs norm growth}
Let $u_\ep(t)$ (resp., $u(t)$) be the $H^s(\mathbb{R})$-solution to the biharmonic NLS \eqref{eq: biharmonic NLS} (resp., the NLS \eqref{eq: NLS}), and suppose that
$$
\|u_0\|_{L_x^2} \leq R_0
\qquad \textup{and} \quad \sup_{\ep\in (0,1]}\|u_{\ep,0}\|_{L_x^2}\leq R_0,
$$
and
$$
\|u_0\|_{H_x^s} \leq R_s
\qquad \textup{and} \qquad
\sup_{\ep\in(0,1]}\|u_{\ep,0}\|_{H_x^s}\leq R_s.
$$
Then, there exist constants $K_1, K_2>0$, depending only on $s, R_0$ such that
\begin{equation}\label{ineq: Hs norm growth}
\|u_\ep(t)\|_{H_x^s}+\|u(t)\|_{H_x^s}
\leq 
K_1e^{K_2|t|}
R_s.
\end{equation}
\end{corollary}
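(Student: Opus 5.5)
The plan is to iterate Proposition~\ref{prop: persistence of regularity} (and Remark~\ref{rmk: NLS LWP} for the cubic NLS) on consecutive time intervals of fixed length, using mass conservation to keep the step length constant. By conservation of mass \eqref{eq: L2 cons law}, $\|u_\ep(t)\|_{L_x^2}=\|u_{\ep,0}\|_{L_x^2}\le R_0$ and $\|u(t)\|_{L_x^2}\le R_0$ for all $t$, uniformly in $\ep\in(0,1]$. Let $T=T(R_0)\sim R_0^{-4}$ and $C_s=C_s(R_0)$ be the constants from Proposition~\ref{prop: persistence of regularity}. Both depend only on $R_0$ (and $s$), not on the $H^s$ size of the data or on $\ep$. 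Without loss of generality we may take $C_s\ge1$. By time-reversal symmetry it suffices to treat $t\ge0$.

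\textbf{Step 1 (one step).} Fix $n\ge0$ and regard $u_\ep(nT)$ as initial data at time $nT$. By time translation invariance, we apply Proposition~\ref{prop: persistence of regularity} with $L^2$-bound $R_0$, which holds by mass conservation, and with $H^s$-bound $\|u_\ep(nT)\|_{H_x^s}$. Together with uniqueness, this gives
$$
\sup_{t\in[nT,(n+1)T]}\|u_\ep(t)\|_{H_x^s}\le C_s\|u_\ep(nT)\|_{H_x^s}.
$$
The same bound holds for $u$ by Remark~\ref{rmk: NLS LWP}. The key point is that the step length $T$ is the same at every step, because it is determined only by the conserved $L^2$ norm.

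\textbf{Step 2 (iteration).} By induction on $n$ we obtain $\|u_\ep(nT)\|_{H_x^s}\le C_s^nR_s$, and hence
$$
\sup_{t\in[nT,(n+1)T]}\|u_\ep(t)\|_{H_x^s}\le C_s^{n+1}R_s.
$$
For a given $t\ge0$, choose $n=\lfloor t/T\rfloor$, so that $n\le t/T$. Then
$$
\|u_\ep(t)\|_{H_x^s}\le C_s\,e^{(\log C_s)\,t/T}\,R_s.
$$
Adding the analogous bound for $u$ yields \eqref{ineq: Hs norm growth} with $K_1=2C_s$ and $K_2=\log(C_s)/T$. Both constants depend only on $s$ and $R_0$.

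\textbf{Expected obstacle.} The only delicate point is that the constant $C_s$ in Proposition~\ref{prop: persistence of regularity} must be genuinely independent of $R_s$ and of $\ep$. Otherwise the growth factor could change from step to step and the iteration would not close into a geometric bound. This independence holds because the $X_T^s$ estimate \eqref{ineq: contraction bound 1} is linear in $\|u_\ep\|_{X_T^s}$, with a coefficient controlled solely by $\|u_\ep\|_{X_T^0}\le 2cR_0$. The same linearity also gives global existence in $H^s$, since the $H^s$-norm cannot blow up in finite time.
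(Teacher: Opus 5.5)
Your proposal is correct and takes the same route as the paper. Mass conservation fixes the step length $T(R_0)$ and the one-step growth factor $C_s(R_0)$ in Proposition~\ref{prop: persistence of regularity} (and Remark~\ref{rmk: NLS LWP}), and iterating the one-step bound $\|u_\ep(t+\delta)\|_{H_x^s}\le C_s\|u_\ep(t)\|_{H_x^s}$ gives geometric, hence exponential, growth. You spell out the induction, the choice $n=\lfloor t/T\rfloor$ and the constants $K_1=2C_s$, $K_2=\log(C_s)/T$ more explicitly than the paper does; take $C_s\ge 2$ if you want $K_2>0$ strictly.
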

\begin{proof}
By the mass conservation law \eqref{eq: L2 cons law}, for any $t\in\R$, we obtain
$$
\sup_{\ep\in(0,1]}\|u_\ep(t)\|_{L_x^2}\leq R_0\quad\textup{and} \quad \|u(t)\|_{L_x^2}\leq R_0.
$$
Hence, by Proposition~\ref{prop: persistence of regularity} and Remark~\ref{rmk: NLS LWP}, there exist $\delta, C_s>0$, depending only on $R_0$ such that
\begin{equation}\label{ineq: norm growth one step}
\|u_\ep(t +\delta)\|_{H_x^s} \leq C_s\|u_\ep(t)\|_{H_x^s} \qquad \textup{and} \qquad
\|u(t+\delta)\|_{H_x^s} \leq C_s\|u(t)\|_{H_x^s}, 
\end{equation}
for all $t\in\R$. Then iterating \eqref{ineq: norm growth one step} yields \eqref{ineq: Hs norm growth}.
\end{proof}

\section{Proof of the main theorem}\label{sec: main thm proof}

\begin{proposition}[Local-in-time convergence]\label{prop: local-in-time convergence}
Let $0\leq s \leq 4$ and suppose that
\begin{equation}\label{ineq: R0, Rs definition}
\sup_{\ep\in(0,1]}\|u_{\ep,0}\|_{L_x^2}, \quad \|u_0\|_{L_x^2}\leq R_0
\qquad
\textup{and}
\qquad
\sup_{\ep\in(0,1]}\|u_{\ep,0}\|_{H_x^s}, \quad \|u_0\|_{H_x^s}\leq R_s.
\end{equation}
For each $\ep\in(0,1]$, let $u_\ep\in C(\R;H_x^s)$ (resp. $u\in C(\R;H_x^s)$) denote the unique solution to the biharmonic NLS \eqref{eq: biharmonic NLS} (resp. the NLS \eqref{eq: NLS}). Then, there exist a time $T=T(R_0)>0$ and a constant $C=C(s, R_0)>1$ which are independent of $\ep$ and $R_s$, such that
$$
\|u_\ep-u\|_{C_t([-T,T];L_x^2)}
\leq
C\|u_{\ep,0}-u_0\|_{L_x^2}
+C\ep^{\frac{s}{2}}R_s.
$$
\end{proposition}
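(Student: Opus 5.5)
We compare the Duhamel formulations of the two equations on $[-T,T]$, with $T=T(R_0)$ taken no larger than the time from Proposition~\ref{prop: persistence of regularity} and Remark~\ref{rmk: NLS LWP}. On that interval both solutions then satisfy $\|u_\ep\|_{X_T^0},\|u\|_{X_T^0}\le C_0R_0$ and $\|u_\ep\|_{X_T^s},\|u\|_{X_T^s}\le C_sR_s$, uniformly in $\ep$. Write $w:=u_\ep-u$. Then
\begin{align*}
w(t)&=S_\ep(t)(u_{\ep,0}-u_0)+\big(S_\ep(t)-S(t)\big)u_0\\
&\quad-i\lambda\int_0^tS_\ep(t-t_1)\big(|u_\ep|^2u_\ep-|u|^2u\big)(t_1)\,dt_1\\
&\quad-i\lambda\int_0^t\big(S_\ep(t-t_1)-S(t-t_1)\big)(|u|^2u)(t_1)\,dt_1.
\end{align*}
We estimate $\|w\|_{X_T^0}$ term by term.

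The first term is bounded by $c\|u_{\ep,0}-u_0\|_{L^2}$, using Lemma~\ref{lem: Strichartz}. The third term is handled as in \eqref{ineq: contraction bound 2}. By \eqref{ineq: cubic difference}, Strichartz with dual pair $(8/7,4/3)$ and H\"older, it is bounded by
\[
cT^{1/2}\big(\|u_\ep\|_{X_T^0}^2+\|u\|_{X_T^0}^2\big)\|w\|_{X_T^0}\le 2cC_0^2R_0^2T^{1/2}\|w\|_{X_T^0}.
\]
Shrinking $T\sim R_0^{-4}$ so that this coefficient is at most $1/2$, we absorb this term into the left-hand side.

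The difference-of-flows terms are split into low frequencies $P_{\le N_0}$ and high frequencies $P_{>N_0}$, with $N_0=\frac12\ep^{-1/2}$.

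\textbf{Low frequencies.} By \eqref{ineq: freq loc diff est 1}, the low-frequency part of the second term is bounded by $\ep^{s/2}T\|u_0\|_{H^s}\le\ep^{s/2}R_s$. By \eqref{ineq: freq loc diff est 2} with $(q,r)=(8,4)$ and the fractional Leibniz estimate \eqref{ineq: lwp proof step 1}, the low-frequency part of the fourth term is bounded by
\[
\ep^{s/2}T\cdot T^{1/2}\|u\|_{X_T^0}^2\|u\|_{X_T^s}\ls \ep^{s/2}R_0^2C_sR_s.
\]

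\textbf{High frequencies.} Here we do not use any cancellation between the two flows. We simply bound $S_\ep$ and $S$ separately by Strichartz, uniformly in $\ep$ thanks to Lemma~\ref{lem: Strichartz}. Since $|\xi|>N_0$ gives $1\le (2\ep^{1/2}|\xi|)^s$, Bernstein-free Plancherel yields $\|P_{>N_0}f\|_{L^2}\ls\ep^{s/2}\|f\|_{H^s}$. Hence the high-frequency part of the second term is $\ls\ep^{s/2}R_s$.

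For the fourth term we need the inhomogeneous analogue. Because $P_{>N_0}$ is a sharp cutoff, it is not obviously bounded on $L^{4/3}$ uniformly. The first thing I would try is to apply Strichartz with the energy-type dual pair $(\tilde q',\tilde r')=(1,2)$, which corresponds to $(\infty,2)$ and is admissible. This gives a bound by
\[
\|P_{>N_0}(|u|^2u)\|_{L^1_TL^2_x}\ls\ep^{s/2}\|\la\pa_x\ra^s(|u|^2u)\|_{L^1_TL^2_x}.
\]
One then bounds $\|\la\pa_x\ra^s(|u|^2u)\|_{L^2}\ls\|u\|_{L^\infty}^2\|u\|_{H^s}$. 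Since $L^\infty$ is unavailable at the $L^2$ level, a safer route is to write it as $\|u\|_{L^8}^2\|\la\pa_x\ra^s u\|_{L^4}$, using the admissible pair $(16/3,8)$. Alternatively, one can replace the sharp cutoff by a smooth Littlewood--Paley cutoff at scale $N_0$, which costs nothing in \eqref{ineq: phase function difference}. This makes $P_{>N_0}$ bounded on $L^{4/3}$, and then \eqref{ineq: lwp proof step 1} gives $\ls\ep^{s/2}T^{1/2}R_0^2C_sR_s$.

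Collecting the bounds gives $\|w\|_{X_T^0}\le C\|u_{\ep,0}-u_0\|_{L^2}+C\ep^{s/2}R_s$, with $C$ depending on $s$ and $R_0$ only, as claimed. I expect the main obstacle to be this high-frequency Duhamel term: it must be bounded with constants depending on $R_s$ only linearly and on $R_0$ only through the local time. The smooth-cutoff modification seems the cleanest way to make this work.
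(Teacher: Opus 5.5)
Your proposal is correct and follows essentially the paper's proof: the same four-term Duhamel decomposition, the same split at $N_0=\frac12\ep^{-1/2}$ (using \eqref{ineq: freq loc diff est 1}--\eqref{ineq: freq loc diff est 2} on low frequencies and $\|P_{>N_0}f\|_{L^2}\ls \ep^{s/2}\|f\|_{H^s}$ on high frequencies), and the same absorption of the cubic-difference term by taking $T\sim R_0^{-4}$ (and $T\le 1$, which you use implicitly).

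The only deviation is the high-frequency Duhamel piece, which the paper handles directly with the dual pair $(8/7,4/3)$. Your worry there is unnecessary: in one dimension the sharp projection $P_{\le N}$ is a combination of modulated Hilbert transforms, so it is bounded on $L^p$, $1<p<\infty$, uniformly in $N$. That fact is already implicit in \eqref{ineq: freq loc diff est 2}, which you use. Your $(1,2)$ route also works once you record $\|u\|_{L_T^{16/3}L_x^8}\ls R_0$, which follows from Strichartz applied to Duhamel's formula; the smooth-cutoff variant works as well.
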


\begin{proof}
Let $T:=\min\{1, c_0^2R_0^{-4}\}$ with sufficiently small $0<c_0\ll1$. Then, by Proposition~\ref{prop: persistence of regularity} and Remark~\ref{rmk: NLS LWP}, $u_\ep(t)$ and $u(t)$ exist on $[-T,T]$ and satisfy
\begin{equation}\label{ineq: uniform bounds of solutions}
\|u_\ep\|_{X_T^0}+\|u\|_{X_T^0}\ls R_0
\qquad \textup{and} \qquad
\|u_\ep\|_{X_T^s}+\|u\|_{X_T^s}\ls R_s.
\end{equation}
Using Duhamel's formula, we expand
$$
\begin{aligned}
u_\ep(t)-u(t)
=&
S_\ep(t)(u_{\ep,0}-u_0)
+(S_\ep(t)-S(t))u_0
-i\lambda
\int_0^tS_\ep(t-t')(|u_\ep|^2u_\ep-|u|^2u)(t')\, dt' \\
&-i\lambda
\int_0^t(S_\ep(t-t')-S(t-t'))(|u|^2u)(t')\, dt'
=:\textup{(I)} + \textup{(II)} + \textup{(III)} + \textup{(IV)}.
\end{aligned}
$$
First, \eqref{ineq: Strichartz 1} and \eqref{ineq: freq loc diff est 1} imply that
$
\|\textup{(I)}\|_{X_T^0}\ls \|u_{\ep,0}-u_0\|_{L_x^2}
$
and
$$
\begin{aligned}
\|\textup{(II)}\|_{X_T^0}
&\leq
\|(S_\ep(t)-S(t))P_{\leq N_0}u_0\|_{X_T^0}
+
\|S_\ep(t)P_{>N_0}u_0\|_{X_T^0}
+
\|S(t)P_{>N_0}u_0\|_{X_T^0}\\
&\ls
\ep^{\frac{s}{2}}T\|u_0\|_{H_x^s}+\|P_{>N_0}u_0\|_{L_x^2}
\ls \ep^{\frac{s}{2}}T\|u_0\|_{H_x^s}+N_0^{-s}\|u_0\|_{H_x^s}
\ls \ep^{\frac{s}{2}}R_s.
\end{aligned}
$$
For the last inequality, we used the fact that $T\leq 1$ and $N_0=\frac{1}{2}\ep^{-\frac{1}{2}}$.

Next, using \eqref{ineq: Strichartz 2} with the inequality \eqref{ineq: cubic difference}, we obtain
$$
\begin{aligned}
\|\textup{(III)}\|_{X_T^0}
\ls
(\|u_\ep\|_{L_T^8L_x^4}^2+\|u\|_{L_T^8L_x^4}^2)T^{\frac{1}{2}}\|u_\ep-u\|_{L_T^8L_x^4}
\ls T^{\frac{1}{2}}R_0^2\|u_\ep-u\|_{L_T^8L_x^4}\ls c_0\|u_\ep-u\|_{X_T^0},
\end{aligned}
$$
where in the last inequality, we used the definition of $T$ and \eqref{ineq: uniform bounds of solutions}.


Similarly, for $\textup{(IV)}$, decompose $|u|^2u=P_{\leq N_0}(|u|^2u)+P_{>N_0}(|u|^2u)$ and applying \eqref{ineq: freq loc diff est 2} and \eqref{ineq: Strichartz 2} respectively, we have
$$
\begin{aligned}
\|\textup{(IV)}\|_{X_T^0}
\ls
\ep^{\frac{s}{2}}T\||u|^2u\|_{L_T^{\frac{8}{7}}W_x^{s,\frac{4}{3}}}
+N_0^{-s}\|P_{>N_0}(|u|^2u)\|_{L_T^{\frac{8}{7}}W_x^{s,\frac{4}{3}}}
\ls
\ep^{\frac{s}{2}}\||u|^2u\|_{L_T^{\frac{8}{7}}W_x^{s,\frac{4}{3}}}.
\end{aligned}
$$
Then by \eqref{ineq: lwp proof step 1} and the definition of $T$ with \eqref{ineq: uniform bounds of solutions}, we get
$$
\|\textup{(IV)}\|_{X_T^0}
\ls
\ep^{\frac{s}{2}}T^{\frac{1}{2}}\|u\|_{X_T^0}^2\|u\|_{X_T^s}
\ls
\ep^{\frac{s}{2}}T^{\frac{1}{2}}R_0^2R_s \ls \ep^{\frac{s}{2}}R_s.
$$
Combining the estimates for $\textup{(I)--(IV)}$, we obtain
$$
\|u_\ep-u\|_{X_T^0}
\ls
\|u_{\ep,0}-u_0\|_{L_x^2}+c_0\|u_\ep-u\|_{X_T^0}+\ep^{\frac{s}{2}}R_s.
$$
Since $c_0>0$ is sufficiently small, our proposition follows.
\end{proof}

Combining the local-in-time convergence and the persistence of regularity, we prove our main theorem.

\begin{proof}[Proof of Theorem \ref{thm: main theorem}]
It suffices to treat $t>0$. Let $R_0, R_s$ be chosen so that \eqref{ineq: R0, Rs definition} holds. By \eqref{ineq: norm growth one step} and Proposition~\ref{prop: local-in-time convergence} with the conservation law \eqref{eq: L2 cons law}, there exist $\delta=\delta(R_0)>0$ and $C=C(s,R_0)$ such that for every $\tau\in\R$
\begin{equation}\label{ineq: Hs bound one step}
\sup_{t\in[\tau, \tau+\delta]}
\Big(\|u_\ep(t)\|_{H_x^s}+\|u(t)\|_{H_x^s}\Big)
\leq
C\Big(\|u_\ep(\tau)\|_{H_x^s}+\|u(\tau)\|_{H_x^s}\Big)
\end{equation}
and
\begin{equation}\label{ineq: local conv one step}
\|u_\ep-u\|_{C_t([\tau,\tau+\delta];L_x^2)}
\leq
C\|u_\ep(\tau)-u(\tau)\|_{L_x^2}
+
C\ep^{\frac{s}{2}}\Big(\|u_\ep(\tau)\|_{H_x^s}+\|u(\tau)\|_{H_x^s}\Big).
\end{equation}
For any $n\in\Z_{\geq 0}$, let $t_n:=n\delta$. Then iterating \eqref{ineq: Hs bound one step} leads to
\begin{equation}\label{ineq: persistence of regularity n step}
\|u_\ep(t_{n-1})\|_{H_x^s}+\|u(t_{n-1})\|_{H_x^s}
\leq
C^{n-1}(\|u_{\ep,0}\|_{H_x^s}+\|u_0\|_{H_x^s})
\leq
2C^{n-1}R_s.
\end{equation}
On the other hand, applying \eqref{ineq: local conv one step} at $\tau=t_{n-1}$ and using \eqref{ineq: persistence of regularity n step}, we obtain
$$
\|u_\ep-u\|_{C_t([t_{n-1},t_n];L_x^2)}
\leq
C\|u_\ep(t_{n-1})-u(t_{n-1})\|_{L_x^2}
+
2C^nR_s\ep^{\frac{s}{2}}.
$$
Therefore, by induction and $2n\leq 2^n$, we obtain
$$
\begin{aligned}
\|u_\ep-u\|_{C_t([0,t_n];L_x^2)}
\leq
C^n\|u_{\ep,0}-u_0\|_{L_x^2}+2nC^nR_s\ep^{\frac{s}{2}}
\leq
(2C)^n\Big(\|u_{\ep,0}-u_0\|_{L_x^2}+R_s\ep^{\frac{s}{2}}\Big).
\end{aligned}
$$
Taking a constant $K(R_0)=\ln(2C)/\delta$. Since $(2C)^n\leq e^{K|t_n|}$, our theorem follows.
\end{proof}

\bibliographystyle{abbrv}
\bibliography{Reference}

\end{document}